\newtheorem{theorem}{Theorem}
\newtheorem{lemma}[theorem]{Lemma}
\theoremstyle{definition}
\numberwithin{equation}{section}
\begin{document}

\author{Hiroki Takahasi}

\address{Keio Institute of Pure and Applied Sciences (KiPAS), Department of Mathematics,
Keio University, Yokohama,
223-8522, JAPAN} 
\email{hiroki@math.keio.ac.jp}
\urladdr{\texttt{http://www.math.keio.ac.jp/~hiroki/}}

\subjclass[2010]{11A55, 11K50, 37A45}
\thanks{{\it Keywords}: continued fractions, partial quotients, Hausdorff dimension}


\title[]
 {Hausdorff dimension of sets with\\ restricted, slowly growing partial quotients} 
 \maketitle
 
 \begin{abstract}
 I. J. Good (1941) showed that the set of irrational numbers in $(0,1)$ whose 
partial quotients $a_n$ 
 tend to infinity is of Hausdorff dimension $1/2$.
 A number of related results 
 impose restrictions of the type $a_n\in B$ or $a_n\geq f(n)$,
  where $B$ is an infinite subset of $\mathbb N$ and $f$ is a rapidly growing function with $n$.
We show that, for an arbitrary $B$ and an arbitrary $f$ with values in $[\min B,\infty)$ and tending to infinity, the set
of irrational numbers in $(0,1)$ such that
\[ a_n\in B,\ a_n\leq f(n)\text{ for 
all $n\in\mathbb N$, and }a_n\to\infty\text{ as }n\to\infty\]
is of Hausdorff dimension $\tau(B)/2,$
where $\tau(B)$ is the exponent of convergence of $B$. 
 \end{abstract}

 \section{Introduction}
 Let $\mathbb I$ denote the set of irrational numbers in $(0,1)$.
  Each $x\in\mathbb I$ has
 an infinite continued fraction expansion 
 $x=[a_1(x),a_2(x),\ldots]=
 1/(a_1(x)+1/(a_2(x)+\cdots))$, where
the positive integer 
$a_n(x)$ is called a {\it partial quotient of $x$}.
 Sets of irrational numbers whose partial quotients obey various conditions have been studied since the works of Jarn\'ik \cite{Jar28} and Good \cite{Goo41}. 
 Among a number of results Good obtained 
in \cite{Goo41}, 
the principal one states that
\[\label{good1}\dim_H\left\{x\in\mathbb I\colon \lim_{n\to\infty}a_n(x)=\infty\right\}=\frac{1}{2},\]
where $\dim_H$ denotes the Hausdorff dimension,
see \cite[Theorem~1]{Goo41}.
Some refinements and extensions of Good's result are available. 
Ramharter \cite{Ram85} showed that
the set of numbers in $\mathbb I$ 
with strictly monotone increasing partial quotients is of Hausdorff dimension $1/2.$
Jaerisch and Kesseb\"ohmer \cite{JaeKes10} considered the
set of numbers in $\mathbb I$ whose all partial quotients are greater than $q$, and
obtained a precise asymptotics of the Hausdorff dimension of this set as $q\to\infty$ 
using the thermodynamic formalism. Jordan and Rams \cite{JorRam12} extended Ramharter's result to some iterated function systems.
 
 In \cite{Hir73}, Hirst proved results analogous to that of Good concerning the cases where $a_n$
 is restricted to belong to some sequence of natural numbers.
 For an infinite subset $B$ of $\mathbb N=\{1,2,\ldots\}$,
 define
 \[E(B)=\left\{x\in\mathbb I\colon a_n(x)\in B\text{ for all } n\in\mathbb N,\text{ and } \lim_{n\to\infty}a_n(x)=\infty\right\},\]
and define the {\it exponent of convergence} of $B$ by
 \[\tau(B)=\inf\left\{s\geq0\colon\sum_{
 k\in B}k^{-s}<\infty\right\}.\]
 Hirst showed
 that $\dim_H E(B)\leq\tau(B)/2$
 (see \cite[Corollary~1]{Hir73}), and
 conjectured that the equality holds
 for an arbitrary $B$. He treated
 the special case $B=\{k^b\}_{k\in\mathbb N}$, $b$
 a positive integer
 to support his conjecture (see \cite[Theorem~3]{Hir73}).
 Cusick \cite[Theorem~1]{Cus90} proved that the equality holds in the case $B$ is not too sparse and satisfies what he called the density assumption.
 After 35 years of the appearance of Hirst's paper \cite{Hir73},
 Wang and Wu solved the conjecture of Hirst in the affirmative, 
 see  \cite[Theorem~1.1]{WanWu08'}.
 For an extension of Wang and Wu's result to iterated function systems, see \cite{CWW13}.


The Hausdorff dimension of sets of the form
 \[F(B,f)=\{x\in E(B)\colon a_n(x)\geq f(n)\text{ for all } n\in\mathbb N\},\]
 where $f$ is any function which tends to infinity with $n$ has also been considered in the literature.
  In \cite[p.227]{Hir73}, Hirst conjectured that the equality
$\dim_HF(B,f)=\tau(B)/2$ holds no matter how rapidly $f$ grows, and 
 Cusick later showed the case $B=\mathbb N$ and $f(n)=2^{2^{2^n}}$ 
as a counterexample (see \cite[Lemma~3]{Cus90}).
Nowadays it is known that the Hausdorff dimension of the set
\[\{x\in\mathbb I\colon a_n(x)\geq f(n)\text{ for all } n\in\mathbb N\}\]
decreases as $f$ grows more rapidly.
Good proved that
it is $1/2$ if $f(n)<(\log n)^A$
for some $A>0$, for all sufficiently large $n$
(see \cite[Theorem~3]{Goo41}).
Hirst improved the upper bound of Good on $f$
to certain double exponential functions
 (see \cite[Theorem~2]{Hir70}).
For an arbitrary double exponential function $f(n)=c^{b^n}$ where $b,c>1$,
$\L$uczak \cite{Luc97}, Feng et al. 
\cite{FWLT97}
showed that the Hausdorff dimension of the set is
 $\frac{1}{1+b}$, after a partial result of Moorthy \cite{Moo92}.
 A related result can be found in \cite{JorRam12}.
Another related result is due to Wang and Wu 
\cite[Theorems~3.1 and 4.2]{WanWu08} which
gives a complete description of the Hausdorff dimension of sets 
$\{x\in\mathbb I\colon a_n(x)\geq f(n)\   \text{for infinitey many} \ n\in\mathbb N\}$.
For an arbitrary infinite subset $B$ of $\mathbb N$ and an arbitrary double exponential function $f$,
Cao, Wang and Wu linked $\tau(B)$ to the Hausdorff dimension of $F(B,f)$ 
(see \cite[Theorem~1.2]{CWW13}).
In \cite[Lemma~3.2]{FLWJ09}, Fan et al. computed the Hausdorff dimension of sets of the form
\[\{x\in\mathbb I\colon s_n\leq a_n(x)<Ns_n\text{ for all }n\in\mathbb N\},\]
where $s_n\geq3$, $s_n\to\infty$ as $n\to\infty$ and $N\geq2$.

 In view of these developments, it is also relevant to describe the Hausdorff dimension of subsets of $E(B)$
 with slowly growing partial quotients, namely,
 sets of the form
 \[G(B,f)=\left\{x\in E(B)\colon a_n(x)\leq f(n)\text{ for all }n\in\mathbb N\right\},\]
 where $f$ is a function taking values in $[\min B,\infty)$ and tending to infinity with $n$.
  Our main result states that the Hausdorff dimension of this set never drops from $\tau(B)/2$ no matter how slowly $f$ grows.
 \begin{theorem}\label{main}
 For any infinite subset $B$ of $\mathbb N$ and
 any function
  $f\colon\mathbb N\to[\min B,\infty)$ such that $\lim_{n\to\infty}f(n)=\infty$,
   the set
   \[G(B,f)=\left\{x\in\mathbb I\colon a_n(x)\in B,\  a_n(x)\leq f(n)\ \text{\rm for all }n\in\mathbb N, \ \text{\rm and } \lim_{n\to\infty}a_n(x)=\infty\right\}\]
   is of Hausdorff dimension $\tau(B)/2.$
 \end{theorem}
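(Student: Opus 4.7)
The upper bound $\dim_H G(B,f)\leq\tau(B)/2$ is immediate from the inclusion $G(B,f)\subseteq E(B)$ together with Hirst's inequality $\dim_H E(B)\leq\tau(B)/2$ from \cite{Hir73}. The nontrivial content is the matching lower bound. My plan is to adapt the block-Cantor construction used by Wang and Wu in \cite{WanWu08'} to prove $\dim_H E(B)=\tau(B)/2$, arranging the blocks so that the resulting Cantor set also respects the upper envelope $a_n(x)\leq f(n)$. Because $f$ is only assumed to diverge and can do so arbitrarily slowly, the modification must take the form of a \emph{delaying} argument: the use of large digits is postponed until $f$ is big enough to accommodate them.

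Fix $\varepsilon>0$. The first ingredient I would use is the following fact, which is the substance of the lower bound in \cite{WanWu08'}: one can find pairwise disjoint finite sets $A_1,A_2,\ldots\subseteq B$ with $\min A_k\to\infty$ such that the finite-alphabet limit set
\[\Lambda(A_k)=\{x\in\mathbb I\colon a_n(x)\in A_k\text{ for all }n\in\mathbb N\}\]
has Hausdorff dimension at least $\tau(B)/2-\varepsilon$ for every sufficiently large $k$. Given such $(A_k)$, I would partition $\mathbb N$ into consecutive blocks $I_k=[N_k,N_{k+1})$ with two requirements on the sequence $(N_k)$: (i) every $n\in I_k$ satisfies $f(n)\geq\max A_k$, which is possible because $f(n)\to\infty$; and (ii) the block lengths $m_k=N_{k+1}-N_k$ grow fast enough in $|A_k|$ and $\max A_k$ for the mass distribution estimate below to be effective. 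On the initial segment $[1,N_1)$ I prescribe any admissible digits (for instance $a_n=\min B$, which lies in $B\cap[\min B,f(n)]$ by hypothesis); on each subsequent $I_k$ I let $a_n(x)$ range freely over $A_k$. The resulting Cantor set $C$ is contained in $G(B,f)$, since its digits lie in $B$, are bounded above by $f(n)$, and tend to infinity thanks to $\min A_k\to\infty$.

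The lower estimate $\dim_H C\geq\tau(B)/2-\varepsilon$ is then obtained by endowing $C$ with a Bernoulli-type product measure whose restriction to each block $I_k$ realises a near-dimensional measure on $\Lambda(A_k)$, and invoking the mass distribution principle in tandem with the standard bounded-distortion and continuant estimates for cylinders of the Gauss map. Letting $\varepsilon\to 0$ finishes the lower bound. The step I expect to be the main obstacle is the usual nuisance of concatenated constructions: controlling the measure of a small ball that straddles the junction between two blocks $I_k$ and $I_{k+1}$, where the alphabet changes. This is handled by requiring $m_k$ to grow sufficiently fast, exactly as in \cite{WanWu08'}; the new upper-envelope constraint $a_n\leq f(n)$ enters only through the placement rule (i) and does not otherwise interfere with the dimension computation.
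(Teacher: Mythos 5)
Your proposal is correct and is essentially the same construction as the paper's: both prove the lower bound by building a concatenated Cantor set from finite alphabets $A_k\subseteq B$ with $\min A_k\to\infty$ whose associated limit sets have dimension near $\tau(B)/2$, delaying each block until $f$ dominates $\max A_k$ (with $a_n=\min B$ on the initial segment), and applying the mass distribution principle with block lengths growing fast enough to control balls that straddle junctions. The only difference is packaging: the paper manufactures the alphabets and measures intrinsically, via Bernoulli measures with weights proportional to $k^{-\tau(B)+\epsilon}$ on $1$-cylinders and the entropy/Lyapunov-exponent formalism together with the Shannon--McMillan--Breiman theorem, whereas you would import the finite-alphabet dimension fact from Wang and Wu; both routes are sound.
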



 Since $E(B)$ contains $G(B,f)$,
 Theorem \ref{main} yields
 $\dim_HE(B)\geq\tau(B)/2$.
 From this and the upper bound
 $\dim_HE(B)\leq\tau(B)/2$
  obtained by Hirst \cite[Corollary~1]{Hir73},
  we obtain 
$\dim_HE(B)=\tau(B)/2,$
a positive answer
to the first conjecture of Hirst as proved by
Wang and Wu in \cite[Theorem~1.1]{WanWu08'}.
The original proof of Wang and Wu does not use ergodic theory.

By virtue of the upper bound obtained by Hirst \cite[Corollary~1]{Hir73},
for a proof of Theorem \ref{main}
it suffices to show the lower bound $\dim_H G(B,f)\geq\tau (B)/2$.
Our proof of this relies on the ergodic theory 
for the iteration of the Gauss map which generates the partial quotients in the continued fraction expansion.
 After introducing notations in Section~2, we construct in Section~3 
 a sequence of Bernoulli measures with non-uniform weights, supported on finitely many $1$-cylinders indexed by elements of $B$
 and having dimensions (the Kolmogorov-Sinai entropy divided by the
 Lyapunov exponent, see \cite[Theorem~4.4.2]{MauUrb03}) not much smaller than $\tau(B)/2$. 
 This construction relies on a novel use 
 of the exponent of convergence,
 as in \eqref{bn} below.
 Using these measures, 
 in Section~4 we construct fractal subsets
  of $G(B,f)$ and Borel probability measures supported on them, 
 and appeal to the mass distribution principle
 \cite{Fal} to estimate from below
 the Hausdorff dimension of these fractal sets.

 \section{Notations}
 Following the monograph of Khinchin \cite{Khi64}
 we introduce basic notations.
 For each $n\in\mathbb N$ and
 $(a_1,a_2,\ldots,a_n)\in\mathbb N^{n},$ we call the interval
 \[I(a_1,a_2,\ldots,a_n)=\begin{cases}
 \left[\frac{p_n}{q_n},\frac{p_{n-1}+p_n}{q_{n-1}+q_n}\right)& \text{ if $n$ is even,}\\
 \left(\frac{p_{n-1}+p_n}{q_{n-1}+q_n},\frac{p_n}{q_n}\right]& \text{ if $n$ is odd,}\end{cases}\]
 an {\it $n$-cylinder}. Here, $p_k$, $q_k$ $(k=0,\ldots, n)$ are given by the recursion formulas
 \[p_{-1}=1,\ p_0=0,\ p_k=a_kp_{k-1}+p_{k-2}\quad\text{for}\ \ k=1,\ldots,n,\]
 \[q_{-1}=0,\ q_0=1,\ q_k=a_kq_{k-1}+q_{k-2}
 \quad\text{for}\ \ k=1,\ldots,n.\]
 The $n$-cylinder $I(a_1,a_2,\ldots,a_n)$ represents
 the set of all real numbers in $[0,1)$ which have a (finite or infinite) continued fraction expansion beginning by $a_1,a_2,\ldots,a_n$, i.e.,
 \[I(a_1,a_2,\ldots,a_n)=\{x\in[0,1)\colon a_1(x)=a_1,\ a_2(x)=a_2,\ldots,a_n(x)=a_n\}.\]
In the case of $1$-cylinders, note that $I(k)=(\frac{1}{k+1},\frac{1}{k}]$ for each $k\in\mathbb N$.

 We denote 
 $(a_1,\ldots,a_n)\in\mathbb N^n$
 simply by $\omega$
 and write $I(\omega)$
 for the $n$-cylinder $I(a_1,\ldots,a_n).$
 For $\omega_1=(a_1,\ldots,a_n)\in\mathbb N^n$
 and $\omega_2=(b_1,\ldots,b_m)\in\mathbb N^m$, 
 we write $(\omega_1,\omega_2)$ for $(a_1,\ldots,a_n,b_1,\ldots,b_m)\in\mathbb N^{n+m}$
 and so on.

 \section{Measures supported on  restricted $1$-cylinders}
 The continued fraction expansion is generated by iterations of the Gauss map
 $T\colon(0,1]\to[0,1)$ given by
  \[T(x)= \frac{1}{x}-\left\lfloor\frac{1}{x}\right\rfloor,\]
 where $\lfloor y\rfloor$ denotes the largest integer not exceeding $y$.
For each $x\in\mathbb I$
 and $n\geq1$, $a_n(x)=\lfloor 1/T^{n-1}(x)\rfloor $ holds.
  We say a Borel probability measure $\mu$ on $[0,1]$ is {\it $T$-invariant} if
 $\mu(\mathbb I)=1$ and $\mu(T^{-1}(A))=A$ holds for any Borel subset $A$ of $\mathbb I$. 
We say a $T$-invariant Borel probability measure $\mu$ is {\it ergodic} if
$T^{-1}(A)=A$ for a Borel subset $A$
of $\mathbb I$ implies $\mu(A)=0$
or $=1$.
 For a $T$-invariant Borel probability measure $\mu$, let $h(\mu)$ denote the Kolmogorov-Sinai entropy of $\mu$
 relative to the restriction of $T$
 to $\mathbb I$. 
 Since the set of $1$-cylinders 
 generates the Borel sigma-algebra of $\mathbb I$,
 we have
\[h(\mu)=-\lim_{n\to\infty}\frac{1}{n}\sum_{(a_1,\ldots,a_n)\in\mathbb N^n}\mu(I(a_1,\ldots,a_n))\log\mu(I(a_1,\ldots,a_n))\in[0,\infty],\]
 where $0\log0:=0$.
 Define the
 {\it Lyapunov exponent} of $\mu$ relative to $T$ by
 \[\chi(\mu)=\int\log|T'|d\mu\in\left[2\log\frac{\sqrt{5}-1}{2},\infty\right].\]


 \begin{lemma}\label{measure}
 Let $B$ be an infinite subset of $\mathbb N$ with $\tau(B)>0$.
 For any $\epsilon\in(0,\tau(B)/2)$
  there exist a strictly increasing sequence 
  $\{b_m\}_{m\in\mathbb N}$
in $B$ and a sequence $\{\mu_m\}_{m\in\mathbb N}$ of $T$-invariant ergodic Borel probability measures on $[0,1]$
 with the following properties:
 
 \begin{itemize}
 \item[(a)] for all $m\in\mathbb N$, \[\sum_{\stackrel{k\in B}{b_m\leq k<b_{m+1}}}\mu_m(I(k))=1;\]  
\item[(b)] for all $m\in\mathbb N$, both $h(\mu_m)$ and $\chi(\mu_m)$ are finite and 
 \[\liminf_{m\to\infty}\frac{h(\mu_m)}{\chi(\mu_m)}\geq
 \frac{\tau(B)}{2}-\epsilon.\]

 \end{itemize}
 \end{lemma}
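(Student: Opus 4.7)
The plan is to construct $\mu_m$ as a Bernoulli measure whose weights are Gibbs-type, namely proportional to $k^{-s}$ on $B\cap[b_m,b_{m+1})$, where $s$ is chosen slightly below $\tau(B)$. By the definition of the exponent of convergence, $s<\tau(B)$ makes $\sum_{k\in B}k^{-s}=\infty$, so $B$ can be exhausted into consecutive blocks on each of which the restricted series sum is at least $1$; the same choice of $s$ will force $h(\mu_m)/\chi(\mu_m)$ to converge to $s/2$, which is just above $\tau(B)/2-\epsilon$. Concretely, fix $s\in(\tau(B)-2\epsilon,\tau(B))$, put $b_1=\min B$, and inductively take $b_{m+1}$ to be the smallest element of $B$ strictly greater than $b_m$ for which
\[Z_m:=\sum_{\stackrel{k\in B}{b_m\leq k<b_{m+1}}}k^{-s}\geq 1.\]
Such $b_{m+1}$ exists because the tail of $\sum_{k\in B}k^{-s}$ still diverges. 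Let $\mu_m$ be the push-forward under the continued fraction coding of the Bernoulli product measure on $\mathbb N^{\mathbb N}$ with one-dimensional distribution $p_k^{(m)}=k^{-s}/Z_m$ for $k\in B\cap[b_m,b_{m+1})$. Shift-invariance of this product measure translates to $T$-invariance of $\mu_m$, mixing of Bernoulli shifts yields ergodicity, and property (a) is immediate from $\mu_m(I(k))=p_k^{(m)}$.

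For (b), the Bernoulli structure gives
\[h(\mu_m)=-\sum_k p_k^{(m)}\log p_k^{(m)}=s\sum_k p_k^{(m)}\log k+\log Z_m,\]
which is finite since the support is finite. To compute $\chi(\mu_m)=-2\int\log x\,d\mu_m$, use $x=1/(a_1(x)+T(x))$ together with the fact that, under the Bernoulli measure, $a_1(x)$ is independent of $T(x)\sim\mu_m$. Expanding $\log(k+y)=\log k+O(1/k)$ uniformly for $y\in[0,1)$ and $k\geq b_m$ yields
\[\chi(\mu_m)=2\sum_k p_k^{(m)}\log k+O(1/b_m),\]
also finite. Since $Z_m\geq 1$ gives $\log Z_m\geq 0$, and since $\sum_k p_k^{(m)}\log k\geq\log b_m\to\infty$ as $m\to\infty$, the ratio $h(\mu_m)/\chi(\mu_m)$ converges to $s/2>\tau(B)/2-\epsilon$, proving (b).

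The main subtlety---and the ``novel use of the exponent of convergence'' advertised in the introduction---lies precisely in the choice of the exponent $s$ in the weights. If $s>\tau(B)$ the series $\sum_{k\in B}k^{-s}$ converges, so the tail cannot supply enough mass and the inductive block construction eventually breaks down; if $s$ is considerably smaller than $\tau(B)$ the ratio $h/\chi$ will land at $s/2$, missing the target dimension. Choosing $s$ just below $\tau(B)$ simultaneously keeps the series divergent (so the blocks $[b_m,b_{m+1})$ exist) and forces $\log Z_m\geq 0$ (so no entropy is wasted), locking the dimension $h(\mu_m)/\chi(\mu_m)$ near $\tau(B)/2$ regardless of how long each individual block has to be. Once the weights and blocks are in hand, the remaining verifications---$T$-invariance, ergodicity, and finiteness of $h(\mu_m)$ and $\chi(\mu_m)$---are routine.
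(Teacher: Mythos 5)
Your proposal is correct and follows essentially the same route as the paper: partition $B$ into blocks on which $\sum k^{-s}\geq 1$ for an exponent $s$ just below $\tau(B)$ (the paper takes $s=\tau(B)-\epsilon$), take Bernoulli measures with weights proportional to $k^{-s}$, and use the sign of $\log Z_m$ together with $\chi(\mu_m)\approx 2\sum_k p_k^{(m)}\log k$ to bound $h/\chi$ from below. Your estimate of $\chi(\mu_m)$ via independence of $a_1$ and $T(x)$ is slightly sharper than the paper's pointwise bound $|\log x|\leq\log(k+1)$ on $I(k)$, but the argument is the same in substance.
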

 \begin{proof}
 Let $\epsilon\in(0,\tau(B)/2)$.
 We define a strictly increasing sequence $\{b_m\}_{m\in\mathbb N}$ in $B$ inductively
 as follows: set $b_1=\min B$, and
 $b_{m+1}>b_m$ is the minimal integer in $B$ such that
  \begin{equation}\label{bn}
  \sum_{\stackrel{b_m\leq k< b_{m+1}}{k\in B}}k^{-\tau(B)+\epsilon}\geq 1.
 \end{equation}
  This definition makes sense since $\tau(B)$ is the exponent of convergence of $B$.
  Put
 $B_m=\{k\in B\colon b_m\leq k<b_{m+1}\}.$
 Let $\mu_m$ denote the Bernoulli measure which assigns to each $1$-cylinder
 $I(k)$, $k\in B_m$ the probability
 $Z_mk^{-\tau(B)+\epsilon},$
 with the normalizing constant $Z_m=\left(\sum_{k\in B_m}k^{-\tau(B)+\epsilon}\right)^{-1}.$
 Item (a) is obvious from the definition of $\mu_m$.
 
 Since
  $\mu_m$ is a Bernoulli measure, it is $T$-invariant and ergodic.
   The entropy of $\mu_m$ is given by
   \[h(\mu_m)=-\sum_{k\in B_m}\mu(I(k))\log \mu(I(k)).\]
Then 
\begin{equation}\label{entropy} \begin{split}
    h(\mu_m)
    &=-\sum_{k\in B_m}Z_mk^{-\tau(B)+\epsilon}\log (Z_mk^{-\tau(B)+\epsilon})\\&=-\log Z_m+(\tau(B)-\epsilon) Z_m\sum_{k\in B_m}k^{-\tau(B)+\epsilon}\log k\\
    &\geq (\tau(B)-\epsilon) Z_m\sum_{k\in B_m}
    k^{-\tau(B)+\epsilon}\log k,
    \end{split}\end{equation}
because of $Z_m\leq1$ from \eqref{bn}.
For the Lyapunov exponent of $\mu_m$ we have
\begin{equation}\label{exponent}
\chi(\mu_m)=2\int|\log x|d\mu_m(x)\leq 2Z_m\sum_{k\in B_m}
k^{-\tau(B)+\epsilon}\log(k+1).\end{equation}
From \eqref{entropy} and \eqref{exponent}, for all sufficiently large $m$ we have
\[\frac{h(\mu_m)}{\chi(\mu_m)}\geq
\frac{1}{2}(\tau(B)-\epsilon)\min_{k\in B_m}\frac{\log k}{\log(k+1)}
\geq\frac{1}{2}\tau(B)-\epsilon,\]
which implies (b).
\end{proof}

 \section{Proof of Theorem~\ref{main}.}
 Let $B$ be an infinite subset of $\mathbb N$
 and $f\colon\mathbb N\to[\min B,\infty)$ be a function such that $\lim_{n\to\infty}f(n)=\infty$.
 By virtue of the upper bound obtained by Hirst \cite[Corollary~1]{Hir73},
  it is enough to show the lower bound \begin{equation}\label{desire}\dim_H G(B,f)\geq\frac{1}{2}\tau (B).\end{equation}
    We assume $\tau(B)>0$, for otherwise there is nothing to prove.
    For any number strictly less than $\tau(B)/2$,
    we will construct a subset of $G(B,f)$ whose Hausdorff dimension exceeds that number.

  Let $\epsilon\in(0,\tau(B)/2).$
 Let $\{b_m\}_{m\in\mathbb N}$ be a strictly monotone increasing sequence in $B$ and let $\{\mu_m\}_{m\in\mathbb N}$ be a sequence of $T$-invariant ergodic Borel probability measures
  for which the conclusions of Lemma~\ref{measure} hold.
 Set
 \begin{equation}\label{def-bm}
 B_m=\begin{cases}\{\min B\}&\text{ for $m=1$.}\\
 \{k\in B\colon b_m\leq k<b_{m+1}\}&\text{ for $m\geq2$.}\end{cases}\end{equation}
  For each $m\geq1$ and an integer $\ell\geq1$ we define  \[B_m^{\ell}=\{(a_1,\ldots,a_{\ell})\in\mathbb N^{\ell}\colon a_i\in B_m\quad \text{for}\ \ i=1,\ldots, \ell\}.\]
Put $\ell_1=1$ and $A_1^{\ell_1}=B_1$ for ease of notation. Let $m\geq2$.
From Birkhoff's ergodic theorem and Shannon-McMillan-Breiman's theorem (see e.g. \cite{CFS}) applied to the measure $\mu_m$,
and the fact (see \cite[Chapter~7, $\S$4]{CFS}) that there exists a constant $C>1$ such that $|(T^n)'(x)|/|(T^n)'(y)|\leq C$ holds for every $n\geq1$
and all $x,y\in(0,1]$ contained in the same $n$-cylinder,
there exist an integer $\ell_m\geq1$ and a subset $A_m^{\ell_m}$
of $B_m^{\ell_m}$ such that
\begin{equation}\label{qe1}
 \left|\frac{1}{\ell_m}\log\#A_m^{\ell_m}- h(\mu_m)\right|\leq\frac{1}{m},\end{equation}
 and 
\begin{equation}\label{qe2}
\left|\frac{1}{\ell_m}\log |(T^{\ell_m})'(x)|- \chi(\mu_m)\right|\leq\frac{1}{m}\quad \text{ for any}\ \ x\in\bigcup_{\omega\in A_m^{\ell_m}} I(\omega),
\end{equation}
where the symbol $\#$ denotes the cardinality of sets.
Following the notation in the end of Section~2,
for each $m\geq1$ and $t\geq1$ put 
\[A_m^{t\ell_m }=\{(\omega_1,\ldots,\omega_{t})\in\mathbb N^{t\ell_m}\colon\omega_i\in A_m^{\ell_m}\quad
\text{for}\ \ i=1,\ldots,t\}.\]
 Let $\{t_m\}_{m\in\mathbb N}$ be a sequence of positive integers such that
  for every $m\geq2$ we have
  \begin{equation}\label{bm}b_{m+1}\leq\inf\left\{ f(n)\colon \sum_{j=1}^{m-1}t_j\ell_j+1\leq n\leq\sum_{j=1}^mt_j\ell_j\right\}.\end{equation}
  Since $\lim_{n\to\infty}f(n)=\infty$,
  one can choose such a sequence 
  by induction on $m$. 

We represent each integer $M\geq t_1$ as
\begin{equation}\label{k}
M=t_1+\cdots+t_{m}+s,\ 0\leq s\leq t_{m+1}-1,\end{equation}
and introduce
a finite subset of $\mathbb N^{t_1\ell_1+\cdots+t_m\ell_m+s\ell_{m+1}}$ as
   \[ \Omega(t_1,\ldots,t_{m},s)=\left\{\begin{tabular}{l}
 $ \left\{\begin{tabular}{l}$(\omega_1,\ldots,\omega_{m+1})\colon\omega_j\in A_j^{t_j\ell_j}\quad\text{for}\ \  j=1,\ldots,m$\\
and$\ \ \omega_{m+1}\in A_{m+1}^{s\ell_{m+1}}$\end{tabular}\right\}\quad$ if $s\neq0$,\\
 $\left\{(\omega_1,\ldots,\omega_m)\colon \omega_j\in A_j^{t_j\ell_j}\quad\text{for}\ \  j=1,\ldots, m\right\}\quad\quad\quad$ if  $s=0$\\
  \end{tabular}\right\}.\]
Set
\[\Lambda= \bigcap_{M=t_1}^{\infty}\overline
{\bigcup_{\omega\in \Omega(t_1,\ldots,t_m,s)} I(\omega)}.\]
The set $\Lambda$ is an intersection of decreasing compact sets, and so a non-empty compact set.
By construction we have \[E(B)\supset\Lambda\cap\mathbb I.\]
Let $x\in\Lambda\cap\mathbb I$.
By construction,
for each $m\geq1$ and $0\leq s\leq t_{m+1}-1$ there exists $\omega\in \Omega(t_1,\ldots,t_m,s)$ such that $x\in I(\omega).$ The first alternative of \eqref{def-bm} gives
 \[ a_n(x)=\min B\leq  f(n)\quad \text{for}\ \ n=1,\ldots,t_1\ell_1.\]
For every $m\geq2$, the second alternative of \eqref{def-bm} and \eqref{bm} yield
 \[a_n(x)< b_{m+1}\leq  f(n)\quad \text{for} \ \ n=\sum_{j=1}^{m-1}t_j\ell_j+1,\ldots,\sum_{j=1}^m
 t_j\ell_j.\]
 It follows that $x\in G(B,f)$, and therefore
 \[G(B,f)\supset\Lambda\cap\mathbb I.\]

For each $\omega\in  \Omega(t_1,\ldots,t_{m},s)$ fix
a point $x(\omega)\in I(\omega)\cap \Lambda$. Let $\nu_M$ denote the uniform probability distribution on the finite set $\bigcup\{x(\omega)\colon\omega\in  
\Omega(t_1,\ldots,t_{m},s) \},$
namely 
$\nu_M=(1/\#\Omega(t_1,\ldots,t_{m},s))
\sum_{\omega\in\Omega(t_1,\ldots,t_{m},s)}\delta_{x(\omega)}$
where $\delta_{x(\omega)}$
denotes the unit point mass at $x(\omega)$.
Let $\nu$ be an accumulation point of the sequence $\{\nu_M\}_{M\geq t_1}$
in the weak* topology on the space of Borel probability measures on $[0,1]$.
We claim that
$\nu(\Lambda)=1$. Indeed,
 since $\nu$
is regular (see \cite[Theorem~6.1]{Wal82}) and $\Lambda$ is compact, 
$\nu(\Lambda)<1$ would imply
the existence of an open set $C\subset[0,1]$ such that  $C\cap\Lambda=\emptyset$
and $\nu(C)>0$.
Since $\nu_M(C)\leq\nu_M([0,1]\setminus\Lambda)=0$ for all $M$, it would follow that
$\nu(C)\leq\liminf_{M\to\infty}\nu_M(C)=0$, a contradiction.
We show that
\begin{equation}\label{mass}
\liminf_{r\to 0}\frac{\log \nu (B(x,r)\cap\Lambda)}{\log r} 
\geq \liminf_{m\to\infty}\frac{h(\mu_m)}{\chi(\mu_m)}\quad\text{for all} \ \ x\in \Lambda,
\end{equation}
where $B(x,r)=(x-r,x+r).$ 
Then,
from the mass distribution principle \cite[p.60]{Fal}
and Lemma~\ref{measure}(b) it follows that
\[\dim_H\Lambda\geq\liminf_{m\to\infty}\frac{h(\mu_m)}{\chi(\mu_m)}\geq\frac{1}{2}\tau(B)-\epsilon.\]
Since $\epsilon\in(0,\tau(B)/2)$ is arbitrary, we obtain \eqref{desire}.

The rest of this paper is devoted to the proof of \eqref{mass}. 
For each integer $M\geq t_1$ in \eqref{k},
 we set \begin{equation}\label{choose-eq0}
r_{M}= e^{-t_{m}\ell_{m}\left(\chi(\mu_{m})+ \frac{3}{m}\right)
-s\ell_{m+1}\left(\chi(\mu_{m+1})+ \frac{2}{m+1}\right)}.\end{equation}
We choose $t_m$ $(m=1,2,\ldots)$ inductively 
so that $r_M$ is strictly monotone decreasing and converges to $0$ as $M\to\infty$.

Let $r\in(0,t_{1}]$, and let $M\geq t_1$ be such that
\begin{equation}\label{ak}r_{M+1}<r\leq r_M.\end{equation}
For each $x\in\Lambda$
we estimate $\nu(B(x,r)\cap\Lambda)$ from above.
Let $|J|$ denote the Euclidean length of a bounded interval $J$.
For each $\omega\in \Omega(t_1,\ldots,t_{m},s)$ we have
\[
\begin{split}
\sup_{I(\omega)}\log|(T^{\sum_{j=1}^mt_j\ell_j+s\ell_{m+1}})'|&\leq\sum_{j=1}^{m} t_{j}\ell_j\left(\chi(\mu_{j})+\frac{1}{j}\right)+s\ell_{m+1}\left(\chi(\mu_{m+1})+\frac{1}{m+1}\right)\\
&\leq t_{m}\ell_m\left(\chi(\mu_{m})+\frac{2}{m}\right)
+s\ell_{m+1}\left(\chi(\mu_{m+1})+\frac{1}{m+1}\right)\\
&\leq \log \frac{1}{r_M}.
\end{split}
\]
The first inequality follows from  \eqref{qe2}.
The second one holds provided
  $t_m$ is chosen to be large enough
compared to $t_1,\ldots,t_{m-1}$.
The last one is by \eqref{choose-eq0}.
Using
the mean value theorem and the above estimate of the derivative, 
\[
|I(\omega)|\geq \frac{|T^{\sum_{j=1}^m t_j\ell_j+s\ell_{m+1}}(I(\omega))|}{\sup_{I(\omega)}|(T^{\sum_{j=1}^m t_j\ell_j+s\ell_{m+1}})'|}=
 \frac{1}{\sup_{I(\omega)}|(T^{\sum_{j=1}^m t_j\ell_j+s\ell_{m+1}})'|}
\geq r_M.\]
Since $r\leq r_M$, this lower bound immediately gives an upper bound on the number 
of intervals $I(\omega)$ which intersect a fixed interval of length $2r$ intersecting $\Lambda$.
For any $x\in \Lambda$  we have 
 \begin{equation}
 \begin{split}\label{covering}
   \#\left\{\omega \in \Omega(t_1,\dots ,t_{m},s)   \colon B(x,r)\cap I(\omega)\neq \emptyset
   \right\}&\le \frac{2r_{M}}{\inf
_{\omega\in   \Omega(t_1,\ldots,t_{m},s)} |I(\omega)|}+2\\
&\leq 4.
 \end{split}
 \end{equation}
Let 
$M'=t_1+\cdots+t_{m'}+s'$ be an integer with $m'\geq m$, $0\leq s'\leq t_{m'+1}-1$ and $M'\geq M+2$.
If
$\omega\in \Omega(t_1,\dots ,t_{m},s)$ and 
$\omega'\in \Omega(t_1,\dots ,t_{m'},s')$, then
$\overline{I(\omega)}\cap \overline{I(\omega')}=\emptyset$,
or $\overline{I(\omega')}$ is contained in the interior of $I(\omega)$.
In particular,
for any $\omega\in \Omega(t_1,\dots ,t_{m},s)$,
$\partial I(\omega)\cap\Lambda=\emptyset$ holds.
Since $\nu(\Lambda)=1$ we have
$\nu(\partial I(\omega))=0$.
Moreover,
for every $q\geq M$, the construction gives
\[\nu_{q}(I(\omega))
=\frac{1}{(\#A_1^{\ell_1})^{t_1}\cdots
(\#A_m^{\ell_m})^{t_m}(\#A_{m+1}^{\ell_{m+1}})^{s}}\leq\frac{1}{
(\#A_m^{\ell_m})^{t_m}
(\#A_{m+1}^{\ell_{m+1}})^{s}}.\]
The weak* convergence of Borel probability measures on $[0,1]$ gives
\[\nu(I(\omega))=\lim_{q\to\infty}\nu_{q}(I(\omega))
\leq\frac{1}{
(\#A_m^{\ell_m})^{t_m}
(\#A_{m+1}^{\ell_{m+1}})^{s}}.\]
By \eqref{qe1} and
\eqref{covering}, for any $x\in\Lambda$ we have
\begin{equation}\label{eq-new}
\nu(B(x,r)\cap\Lambda)\leq 4
e^{-t_{m}\ell_m\left(h(\mu_{m})-\frac{1}{m}\right)}
e^{-s\ell_{m+1}\left(h(\mu_{m+1})-\frac{1}{m+1}\right)}.\end{equation}

If $0\leq s<t_{m+1}-1$ then
$M+1=t_1+\cdots+t_m+s+1$ by \eqref{k}, and
$r_{M+1}=e^{-\ell_{m+1}\left(\chi(\mu_{m+1})+ \frac{2}{m+1}\right)}r_M$
by \eqref{choose-eq0}.
From this and \eqref{eq-new},
\[
\begin{split}
\frac{\log\nu (B(x,r)\cap\Lambda)}{\log r_{M+1}}\geq&\frac{t_{m}\ell_{m}\left(h(\mu_{m})-
1/m\right)+s\ell_{m+1}\left(h(\mu_{m+1})-1/(m+1)\right)}
{t_{m}\ell_{m}\left(\chi(\mu_{m})+3/m\right)
+(s+1)\ell_{m+1}\left(\chi(\mu_{m+1})+2/(m+1)\right)}\\
&+O\left(\frac{1}{|\log r|}\right).\end{split}\]
Since $\liminf_{m\to\infty}h(\mu_m)>0$, 
the two terms in the numerator of the fraction are positive for any sufficiently large $m$.
If necessary, we replace $t_{m}$ by a sufficiently large integer 
so that
for $0\leq s<m$, the first term in the denominator
of the fraction dominates the second one
 so that
\[\frac{\log\nu (B(x,r)\cap\Lambda)}{\log r_{M+1}}\geq\frac{h(\mu_{m})-
1/m}
{\chi(\mu_{m})+4/m}+O\left(\frac{1}{|\log r|}\right).\]
If $m\leq s<t_{m+1}-1$, then 
we clearly have
\[\frac{\log\nu (B(x,r)\cap\Lambda)}{\log r_{M+1}}\geq\min\left\{\frac{h(\mu_{m})-
1/m}
{\chi(\mu_{m})+3/m},
\frac{m\left(h(\mu_{m+1})-1/(m+1)\right)}
{(m+1)\left(\chi(\mu_{m+1})+2/(m+1)\right)}\right\}+O\left(\frac{1}{|\log r|}\right).\]
In the remaining case $s=t_{m+1}-1$,
we have
 $M+1=t_1+\cdots+t_{m+1}$ 
 by \eqref{k} 
and $r_{M+1}=e^{-t_{m+1}\ell_{m+1}\left(\chi(\mu_{m+1})+ \frac{3}{m+1}\right)}$ by \eqref{choose-eq0}.
Using this and \eqref{eq-new},
\[\frac{\log\nu (B(x,r)\cap\Lambda)}{\log r_{M+1}}\geq\frac{
(t_{m+1}-1)\left(h(\mu_{m+1})-1/(m+1)\right)}
{t_{m+1}\left(\chi(\mu_{m+1})+2/(m+1)\right)}+O\left(\frac{1}{|\log r|}\right).\]
As $r\to0$
we have $M\to\infty$ and $r_M\to0$
by \eqref{ak}, and 
 \eqref{mass} follows. 
 This completes the proof of Theorem~\ref{main}. \qed
\subsection*{Acknowledgments}
I thank anonymous referees for their careful readings of the manuscript
and giving useful suggestions for improvements.
I thank Johannes Jaerisch for fruitful discussions.
This research was partially supported by the JSPS KAKENHI 
19K21835, 20H01811.

\end{document}